\numberwithin{equation}{section}
\newtheorem{Theorem}{Theorem}[section]
\newtheorem{Corollary}[Theorem]{Corollary}
\newtheorem{Lemma}[Theorem]{Lemma}
 { \theoremstyle{definition}
\newtheorem{Definition}[Theorem]{Definition}
\newtheorem{Example}[Theorem]{Example} }
\begin{document}

\allowdisplaybreaks

\newcommand{\arXivNumber}{1706.02391}

\renewcommand{\thefootnote}{}

\renewcommand{\PaperNumber}{085}

\FirstPageHeading

\ShortArticleName{The Inverse Spectral Problem for Jacobi-Type Pencils}

\ArticleName{The Inverse Spectral Problem for Jacobi-Type Pencils\footnote{This paper is a~contribution to the Special Issue on Orthogonal Polynomials, Special Functions and Applications (OPSFA14). The full collection is available at \href{https://www.emis.de/journals/SIGMA/OPSFA2017.html}{https://www.emis.de/journals/SIGMA/OPSFA2017.html}}}

\Author{Sergey M.~ZAGORODNYUK}

\AuthorNameForHeading{S.M.~Zagorodnyuk}

\Address{School of Mathematics and Computer Sciences, V.N.~Karazin Kharkiv National University,\\ Svobody Square 4, Kharkiv 61022, Ukraine}
\Email{\href{mailto:Sergey.M.Zagorodnyuk@gmail.com}{Sergey.M.Zagorodnyuk@gmail.com}}

\ArticleDates{Received June 10, 2017, in f\/inal form October 24, 2017; Published online October 28, 2017}

\Abstract{In this paper we study the inverse spectral problem for Jacobi-type pencils. By a Jacobi-type pencil we mean the following pencil $J_5 - \lambda J_3$, where $J_3$ is a Jacobi matrix and $J_5$ is a semi-inf\/inite real symmetric f\/ive-diagonal matrix with positive numbers on the second subdiagonal. In the case of a special perturbation of orthogonal polynomials on a~f\/inite interval the corresponding spectral function takes an explicit form.}

\Keywords{operator pencil; recurrence relation; orthogonal polynomials; spectral function}

\Classification{42C05; 47B36}

\renewcommand{\thefootnote}{\arabic{footnote}}
\setcounter{footnote}{0}

\section{Introduction}

The theory of operator pencils in Banach spaces has many applications and it is actively deve\-lo\-ping nowadays, see~\cite{cit_7000_Markus,cit_Piv,cit_8000_Rodman}. By operator pencils or operator polynomials one means polynomials of complex variable $\lambda$ whose coef\/f\/icients are linear bounded operators acting in a~Banach space~$X$
\begin{gather*} L(\lambda) = \sum_{j=0}^m \lambda^j A_j, \end{gather*}
where $A_j\colon X\rightarrow X$, $j=0,\dots,m$. As it was noted in Rodman's book~\cite{cit_8000_Rodman}, there exist three broad topics in this theory which are the main foci of interest:
\begin{itemize}\itemsep=0pt
\item[(a)] linearization, i.e., reduction to a linear polynomial;
\item[(b)] various types of factorizations;
\item[(c)] the problems of multiple completness of eigenvectors and generalized eigenvectors.
\end{itemize}
If $X$ is a Hilbert space and all operators $A_j$ are self-adjoint, the pencil $L(\lambda)$ is said to be self-adjoint. In the case $m=1$ ($m=2$) the pencil is called linear (respectively quadratic). For the recent progress on quadratic operator pencils and their applications we refer to the book of M\"oller and Pivovarchik~\cite{cit_Piv}.

Linear self-adjoint operator pencils has no spectral theory which could be compatible with the theory of single or commuting self-adjoint operators. The generalized eigenvalue problem
\begin{gather*} (A_0 + \lambda A_1) x = 0,\qquad x\in X, \end{gather*}
is essentially more generic than the usual eigenvalue problem for a self-adjoint operator. We can explain this dif\/ference by the following astonishing result from the theory of linear matrix pencils.

\begin{Theorem}[{\cite[Theorem 15.2.1, p.~341]{cit_7700_Parlett}}]\label{t1_1}
Any real square matrix $B$ can be written as \smash{$B = A M^{-1}$} or $B = M^{-1} A$ where~$A$ and~$M$ are suitable symmetric matrices.
\end{Theorem}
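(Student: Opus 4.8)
The plan is to recast both representations as a single statement about symmetric intertwiners. Let $B$ be a real $n\times n$ matrix. For the form $B=M^{-1}A$ with $A=A^{T}$ and $M=M^{T}$ nonsingular, put $A:=MB$; then the requirement is precisely that $MB$ be symmetric, i.e.
\begin{gather*} MB=B^{T}M. \end{gather*}
I claim the other form $B=AM^{-1}$ follows from the same $M$. Indeed, if $M=M^{T}$ is nonsingular and $MB=B^{T}M$, then setting $N:=M^{-1}$ (again symmetric and nonsingular) and multiplying the identity on both sides by $N$ gives $BN=NB^{T}$, so $A':=BN$ is symmetric and $B=A'N^{-1}$. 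Thus the entire theorem reduces to the following assertion, which I would isolate as the key lemma: \emph{every real square matrix $B$ admits a symmetric nonsingular $M$ with $MB=B^{T}M$}; equivalently, $B$ is conjugate to $B^{T}$ through a symmetric nonsingular matrix.

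For the reduction step I would pass to the rational canonical (Frobenius) form. Write $B=TCT^{-1}$, where $C=C_{1}\oplus\cdots\oplus C_{r}$ is a direct sum of companion matrices of the invariant factors of $B$ and $T$ is nonsingular. Two observations make this decomposition effective. First, the property propagates under the similarity as a \emph{congruence}: if $M_{C}$ is symmetric, nonsingular and satisfies $M_{C}C=C^{T}M_{C}$, then $M_{B}:=(T^{-1})^{T}M_{C}\,T^{-1}$ is again symmetric and nonsingular, and a direct computation using $(TCT^{-1})^{T}=(T^{-1})^{T}C^{T}T^{T}$ shows $M_{B}B=B^{T}M_{B}$. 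Second, the property is block-diagonal: if each $C_{i}$ carries a symmetric nonsingular $M_{i}$ with $M_{i}C_{i}=C_{i}^{T}M_{i}$, then $M_{C}:=M_{1}\oplus\cdots\oplus M_{r}$ works for $C$. Hence it suffices to treat a single companion matrix.

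The heart of the proof is the companion block, and this is where I expect the only real work to lie. Let $C$ be the companion matrix of $p(x)=x^{n}+c_{n-1}x^{n-1}+\cdots+c_{1}x+c_{0}$, with the coefficients placed in the last row. Adopting the convention $c_{n}=1$ and $c_{k}=0$ for $k>n$, I would introduce the Hankel matrix
\begin{gather*} H=(c_{\,i+j-1})_{i,j=1}^{n}. \end{gather*}
Being a Hankel matrix, $H$ is automatically symmetric; its anti-diagonal consists entirely of $1$'s and the entries beyond it (where $i+j>n+1$) vanish, so $\det H=\pm1$ and $H$ is nonsingular. The remaining task is the identity $HC=C^{T}H$, which I would verify by comparing the two sides entrywise: away from the last column of $C$ both products merely shift the Hankel pattern, while the contribution of the coefficient row of $C$ is matched on the other side by the coefficient column of $C^{T}$ through the defining relation of $p$. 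This index bookkeeping---making the shifted Hankel entries agree with the coefficient terms coming from $-c_{0},\dots,-c_{n-1}$---is the single genuinely computational point; once it is checked, assembling the blocks, conjugating back by $T$, and reading off $A$ and $M$ completes the argument. (Alternatively, the key lemma is exactly the Taussky--Zassenhaus theorem on the symmetry of the similarity between a matrix and its transpose, which could be invoked directly.)
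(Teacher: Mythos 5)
The paper gives no proof of this statement: Theorem~\ref{t1_1} is quoted verbatim from Parlett's book (Theorem~15.2.1 there) purely as motivation for why linear self-adjoint pencils lack a spectral theory, so there is nothing in the source to compare your argument against. Judged on its own, your proof is correct and is essentially the classical Frobenius/Taussky--Zassenhaus argument. The reduction of both factorizations to the single condition ``$MB=B^{T}M$ for some symmetric nonsingular $M$'' is right (and the passage from $B=M^{-1}A$ to $B=A'N^{-1}$ with $N=M^{-1}$ is clean), the congruence $M_{B}=(T^{-1})^{T}M_{C}T^{-1}$ does transport the property across similarity, the direct-sum step is immediate, and the rational canonical form can be taken over $\mathbb{R}$ since $B$ is real, so no complex detour is needed. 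The one step you leave as ``index bookkeeping,'' the identity $HC=C^{T}H$ for the Hankel matrix $H=(c_{i+j-1})$, does check out: with the coefficients of $p$ in the last row of $C$, one finds $(HC)_{ij}=c_{i+j-2}\,[j\geq 2]-c_{j-1}H_{in}$ and $(C^{T}H)_{ij}=c_{i+j-2}\,[i\geq 2]-c_{i-1}H_{nj}$, and the four cases $i,j\in\{1\}$ versus $i,j\geq 2$ all agree, with the off-anti-diagonal zeros of $H$ doing the cancellation. So the proposal is a complete proof modulo that short computation, and your closing remark is accurate: the key lemma is exactly the Taussky--Zassenhaus theorem and could be cited outright.
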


Nevertheless, the general spectral theory for linear operator pencils has old and new contributions. In particular, it has applications to quotients of bounded operators and to the study of electron waveguides in graphene, see~\cite{cit_7100_M_G_A} and references therein.

Recently, Ben Amara, Vladimirov and Shkalikov investigated the following linear pencil of dif\/ferential operators~\cite{cit_8_Ben_Amara_Vladimirov_Shkalikov}
\begin{gather}\label{f1_4}
(py'')'' -\lambda(-y'' + cr y)= 0.
\end{gather}
The initial conditions are $y(0) = y'(0) = y(1) = y'(1) = 0$, or $y(0) = y'(0) = y'(1) = (py'')'(1) + \lambda \alpha y(1) = 0$. Here $p,r\in C[0,1]$ are uniformly positive, while the parameters $c$ and $\alpha$ are real. Equation~(\ref{f1_4}) has several physical applications, including a motion of a partially f\/ixed bar with additional constraints in the elasticity theory~\cite{cit_8_Ben_Amara_Vladimirov_Shkalikov}.

It turns out that equation~(\ref{f1_4}) is related to certain generalized orthogonal polynomials, just as the Sturm--Liouville dif\/ferential operator is related to the orthogonal polynomials on the real line (OPRL).

The theory of orthogonal polynomials has numerous old and new contributions and applications, see~\cite{cit_3000_Chihara,cit_5000_Ismail, cit_10000_Simon, cit_20000_Suetin, cit_50000_Gabor_Szego,cit_70000_T}.
This theory is closely related to spectral problems for Jacobi matrices. The direct and inverse spectral problems for Jacobi matrices (with matrix elements) were described, e.g., in~\cite{cit_5_Atkinson,cit_10_Berezansky_Book}. For various other types of spectral problems we refer to historical notes in~\cite{cit_92000_Z_banded_matrices}. Recall the following basic def\/inition from~\cite{cit_95000_Z}.

\begin{Definition}\label{d1_1}A set $\Theta = (J_3, J_5, \alpha, \beta)$, where $\alpha>0$, $\beta\in\mathbb{R}$, $J_3$ is a Jacobi matrix and~$J_5$ is a semi-inf\/inite real symmetric f\/ive-diagonal matrix with positive numbers on the second subdiagonal, is said to be \textit{a Jacobi-type pencil $($of matrices$)$}.
\end{Definition}
From this def\/inition we see that matrices $J_3$ and $J_5$ have the following form
\begin{gather}
J_3 =
\left(
\begin{matrix}
b_0 & a_0 & 0 & 0 & 0 & \cdots\\
a_0 & b_1 & a_1 & 0 & 0 & \cdots\\
0 & a_1 & b_2 & a_2 & 0 &\cdots\\
\vdots & \vdots & \vdots & \ddots \end{matrix}
\right),\qquad a_k>0,\quad b_k\in\mathbb{R},\quad k\in\mathbb{Z}_+, \label{f1_5} \\
J_5 = \left(
\begin{matrix}
\alpha_0 & \beta_0 & \gamma_0 & 0 & 0 & 0 & \cdots\\
\beta_0 & \alpha_1 & \beta_1 & \gamma_1 & 0 & 0 & \cdots\\
\gamma_0 & \beta_1 & \alpha_2 & \beta_2 & \gamma_2 & 0 & \cdots\\
0 & \gamma_1 & \beta_2 & \alpha_3 & \beta_3 & \gamma_3 & \cdots \\
\vdots & \vdots & \vdots &\vdots & \ddots \end{matrix}
\right),\qquad \alpha_n,\beta_n\in\mathbb{R},\quad \gamma_n>0,\quad n\in\mathbb{Z}_+. \label{f1_10}
\end{gather}

With a Jacobi-type pencil of matrices $\Theta$ one associates a system of polynomials $\{ p_n(\lambda) \}_{n=0}^\infty$, such that
\begin{gather*} p_0(\lambda) = 1,\qquad p_1(\lambda) = \alpha\lambda + \beta, \end{gather*}
and
\begin{gather}\label{f1_20}
(J_5 - \lambda J_3) \vec p(\lambda) = 0,
\end{gather}
where $\vec p(\lambda) = (p_0(\lambda), p_1(\lambda), p_2(\lambda),\dots)^{\rm T}$. Here the superscript ${\rm T}$ means the transposition. Polynomials $\{ p_n(\lambda) \}_{n=0}^\infty$ are said to be \textit{associated to the Jacobi-type pencil of matrices~$\Theta$}.

Observe that for each system of orthonormal polynomials on the real line with $p_0=1$ one may choose $J_3$ to be the corresponding Jacobi matrix (which elements are the recurrence coef\/f\/icients), $J_5 = J_3^2$, and $\alpha$, $\beta$ being the coef\/f\/icients of~$p_1$ ($p_1(\lambda) = \alpha\lambda + \beta$).

One can rewrite relation~(\ref{f1_20}) in the scalar form
\begin{gather} \gamma_{n-2} p_{n-2}(\lambda) + (\beta_{n-1}-\lambda a_{n-1}) p_{n-1}(\lambda) + (\alpha_n-\lambda b_n) p_n(\lambda) \notag \\
\qquad{} + (\beta_n-\lambda a_n) p_{n+1}(\lambda) + \gamma_n p_{n+2}(\lambda) = 0,\qquad n\in\mathbb{Z}_+, \label{f1_30}
\end{gather}
where $p_{-2}(\lambda) = p_{-1}(\lambda) = 0$, $\gamma_{-2} = \gamma_{-1} = a_{-1} = \beta_{-1} = 0$.

Let us return to the dif\/ferential equation~(\ref{f1_4}) and explain how it is related to polynomials from~(\ref{f1_30}). The idea is the same as in the classical case. We consider a partition of the inter\-val~$[0,1]$
\begin{gather*} 0 = x_0 < x_1 < x_2 < \cdots < x_{N-1} < x_N = 1, \end{gather*}
with a uniform step $h$. Set $y_j := y(x_j)$, $p_j := p(x_j)$, $r_j := r(x_j)$, $0\leq j \leq N$. Then we replace the derivatives in~(\ref{f1_4}) by the corresponding discretizations $y'(x_j) \approx \frac{y_{j+1} - y_j}{h}$, $y''(x_j) \approx \frac{y_{j+1} - 2y_j + y_{j-1}}{h^2}$. We obtain the following equation
\begin{gather*} p_{j-1} y_{j-2} - 2(p_j + p_{j-1}) y_{j-1} + (p_{j+1}) + 4 p_j + p_{j-1}) y_j - 2(p_{j+1} + p_j) y_{j+1} \\
\qquad{} + p_{j+1} y_{j+2} - \lambda h^2 \big(y_{j-1} - \big(2 - h^2 c r_j\big) y_j + y_{j+1}\big)= 0.
\end{gather*}
Denote
\begin{gather*} \alpha_j = p_{j+1} + 4p_j + p_{j-1},\qquad \beta_j = -2(p_{j+1} + p_j),\qquad \gamma_j = p_{j+1}, \\
a_j = h^2,\qquad b_j = \big({-}2-h^2 c r_j\big) h^2.
\end{gather*}
Then we get
\begin{gather*} \gamma_{j-2} y_{j-2} + \beta_{j-1} y_{j-1} + \alpha_j y_j + \beta_j y_{j+1} + \gamma_j y_{j+2} + \lambda \big(
a_{j-1} y_{j-1} + b_j y_j + a_j y_{j+1}\big) = 0.
\end{gather*}
If we set $\widetilde\lambda = -\lambda$, we obtain a relation of the form~(\ref{f1_30}). Thus, relation~(\ref{f1_30}) forms a discrete grid model for the equation~(\ref{f1_4}). The boundary conditions $y(0)=0$, $y'(0)=0$ correspond to the convention $y_{-2}=y_{-1}=0$. Instead of $[0,1]$ we can consider $[0,+\infty)$. The conditions on the right end of the bar we replace by conditions on the left end which are controlled by $\alpha$ and $\beta$.

In this paper we shall introduce and study the direct and inverse spectral problems for Jacobi type pencils. We shall also consider a special perturbation of the class of orthonormal polynomials on a f\/inite real interval. In that case it is possible to obtain a more convenient integral representation for the corresponding spectral function.

{\bf Notations.} As usual, we denote by $\mathbb{R}$, $\mathbb{C}$, $\mathbb{N}$, $\mathbb{Z}$, $\mathbb{Z}_+$, the sets of real numbers, complex numbers, positive integers, integers and non-negative integers, respectively. By~$\mathbb{P}$ we denote the set of all polynomials with complex coef\/f\/icients.

By $l_2$ we denote the usual Hilbert space of all complex sequences $c = (c_n)_{n=0}^\infty = (c_0,c_1,c_2,\dots)^{\rm T}$ with the f\/inite norm $\| c \|_{l_2} = \sqrt{\sum\limits_{n=0}^\infty |c_n|^2}$. Here $T$ means the transposition. The scalar product of two sequences $c = (c_n)_{n=0}^\infty$, $d = (d_n)_{n=0}^\infty\in l_2$ is given by $(c,d)_{l_2} = \sum\limits_{n=0}^\infty c_n \overline{d_n}$. We denote $\vec e_m = (\delta_{n,m})_{n=0}^\infty\in l_2$, $m\in\mathbb{Z}_+$. By $l_{2,{\rm f\/in}}$ we denote the set of all f\/inite vectors from~$l_2$, i.e., vectors with all, but f\/inite number, components being zeros. By $\operatorname{diag} (c_1,c_2,c_3,\dots)$ we mean a~semi-inf\/inite matrix with $c_j\in\mathbb{C}$ in $j$-th column and $j$-th row and zeros outside the main diagonal.

By $\mathfrak{B}(\mathbb{R})$ we denote the set of all Borel subsets of $\mathbb{R}$. If $\sigma$ is a (non-negative) bounded measure on~$\mathfrak{B}(\mathbb{R})$ then by $L^2_\sigma$ we denote a Hilbert space of all (classes of equivalences of) complex-valued functions $f$ on $\mathbb{R}$ with a f\/inite norm $\| f \|_{L^2_\sigma} = \sqrt{ \int_\mathbb{R} |f(x)|^2 {\rm d}\sigma }$. The scalar product of two functions $f,g\in L^2_\sigma$ is given by $(f,g)_{L^2_\sigma} = \int_{\mathbb{R}} f(x) \overline{g(x)} {\rm d}\sigma$. By $[ f ]$ we denote the class of equivalence in~$L^2_\sigma$ which contains the representative~$f$. By $\mathcal{P}$ we denote a set of all (classes of equivalence which contain) polynomials in $L^2_\sigma$. As usual, we sometimes use the representatives instead of their classes in formulas. Let $B$ be an arbitrary linear operator in $L^2_\sigma$ with the domain $\mathcal{P}$. Let $f(\lambda)\in\mathbb{P}$ be nonzero and of degree $d\in\mathbb{Z}_+$, $f(\lambda) = \sum\limits_{k=0}^d d_k \lambda^k$, $d_k\in\mathbb{C}$. We set
\begin{gather*} f(B) = \sum_{k=0}^d d_k B^k,\qquad B^0 := E|_{\mathcal{P}}. \end{gather*}
If $f\equiv 0$, then $f(B):= 0|_{\mathcal{P}}$.

If H is a Hilbert space then $(\cdot,\cdot)_H$ and $\| \cdot \|_H$ mean the scalar product and the norm in $H$, respectively. Indices may be omitted in obvious cases. For a linear operator $A$ in $H$, we denote by $D(A)$ its domain, by $R(A)$ its range, by $\operatorname{Ker} A$ its null subspace (kernel), and $A^*$ means the adjoint operator if it exists. If $A$ is invertible then $A^{-1}$ means its inverse. $\overline{A}$ means the closure of the operator, if the operator is closable. If $A$ is bounded then $\| A \|$ denotes its norm. For a~set $M\subseteq H$ we denote by $\overline{M}$ the closure of $M$ in the norm of $H$. By $\operatorname{Lin} M$ we mean the set of all linear combinations of elements of $M$, and $\overline{\operatorname{span}}\, M := \overline{\operatorname{Lin} M}$. By $E=E_H$ ($0=0_H$) we denote the identity operator in $H$, i.e., $E_H x = x$, $x\in H$ (respectively the null operator in $H$, i.e., $0_H x = 0$, $x\in H$). If $H_1$ is a subspace of $H$, then $P_{H_1} = P_{H_1}^{H}$ is an operator of the orthogonal projection on $H_1$ in~$H$.

\section{Preliminaries}

In this section, for the convenience of the reader, we recall basic def\/initions and results from~\cite{cit_95000_Z}. Then we state the direct and inverse spectral problems for the Jacobi-type pencils. The direct problem will be solved immediately, while the inverse problem will be considered in the next section. Set
\begin{gather*}
u_n := J_3 \vec e_n = a_{n-1} \vec e_{n-1} + b_n \vec e_n + a_n \vec e_{n+1}, \\
w_n := J_5 \vec e_n = \gamma_{n-2} \vec e_{n-2} + \beta_{n-1} \vec e_{n-1} + \alpha_n \vec e_n + \beta_n \vec e_{n+1}+ \gamma_n \vec e_{n+2},\qquad n\in\mathbb{Z}_+.
\end{gather*}
Here and in what follows by $\vec e_k$ with negative $k$ we mean (vector) zero. The following operator
\begin{gather} A f = \frac{\zeta}{\alpha} (\vec e_1 - \beta \vec e_0) + \sum_{n=0}^\infty \xi_n w_n, \notag \\
f = \zeta \vec e_0 + \sum_{n=0}^\infty \xi_n u_n\in l_{2,{\rm f\/in}},\qquad \zeta, \xi_n\in\mathbb{C}, \label{f3_60}
\end{gather}
with $D(A) = l_{2,{\rm f\/in}}$ is said to be \textit{the associated operator for the Jacobi-type pencil $\Theta$}. Notice that in the sums in~(\ref{f3_60}) only f\/inite number of $\xi_n$ are nonzero. We shall always assume this in the case of elements from the linear span. In particular, we have
\begin{gather*} A J_3 \vec e_n = J_5 \vec e_n,\qquad n\in\mathbb{Z}_+, \end{gather*}
and therefore
\begin{gather*} A J_3 = J_5. \end{gather*}
As usual, the matrices $J_3$ and $J_5$ def\/ine linear operators with the domain $l_{2,{\rm f\/in}}$ which we denote by the same letters.

For an arbitrary non-zero polynomial $f(\lambda)\in\mathbb{P}$ of degree $d\in\mathbb{Z}_+$, $f(\lambda) = \sum\limits_{k=0}^d d_k \lambda^k$, $d_k\in\mathbb{C}$, we set $f(A) = \sum\limits_{k=0}^d d_k A^k$. Here $A^0 := E|_{l_{2,{\rm f\/in}}}$. For $f(\lambda) \equiv 0$, we set $f(A) = 0|_{l_{2,{\rm f\/in}}}$. The following relations hold~\cite{cit_95000_Z}
\begin{gather}\label{f3_110}
\vec e_n = p_n(A) \vec e_0,\qquad n\in\mathbb{Z}_+,\\
\label{f3_120} \big( p_n(A) \vec e_0, p_m(A) \vec e_0 \big)_{l_2} = \delta_{n,m},\qquad n,m\in\mathbb{Z}_+.
\end{gather}
Denote by $\{ r_n(\lambda) \}_{n=0}^\infty$, $r_0(\lambda) = 1$, the system of polynomials satisfying
\begin{gather}
\label{f3_130}
J_3 \vec r(\lambda) = \lambda \vec r(\lambda),\qquad \vec r(\lambda) = (r_0(\lambda), r_1(\lambda),r_2(\lambda), \dots )^{\rm T}.
\end{gather}
These polynomials are orthonormal on the real line with respect to a (possibly non-unique) non-negative f\/inite measure $\sigma$ on~$\mathfrak{B}(\mathbb{R})$ (Favard's theorem). Consider the following operator
\begin{gather}\label{f3_140}
U \sum_{n=0}^\infty \xi_n \vec e_n = \left[ \sum_{n=0}^\infty \xi_n r_n(x) \right ],\qquad \xi_n\in\mathbb{C},
\end{gather}
which maps $l_{2,{\rm f\/in}}$ onto $\mathcal{P}$. Here by $\mathcal{P}$ we denote a set of all (classes of equivalence which contain) polynomials in $L^2_\sigma$. Denote
\begin{gather} \label{f3_150}
\mathcal{A} = \mathcal{A}_\sigma = U A U^{-1}.
\end{gather}
The operator $\mathcal{A} = \mathcal{A}_\sigma$ is said to be \textit{the model representation in $L^2_\sigma$ of the associated operator $A$}.

\begin{Theorem}[\cite{cit_95000_Z}]\label{t3_1} Let $\Theta = (J_3, J_5, \alpha, \beta)$ be a Jacobi-type pencil. Let $\{ r_n(\lambda) \}_{n=0}^\infty$, $r_0(\lambda) = 1$, be a system of polynomials satisfying~\eqref{f3_130} and $\sigma$ be their $($arbitrary$)$ orthogonality measure on~$\mathfrak{B}(\mathbb{R})$. The associated polynomials $\{ p_n(\lambda) \}_{n=0}^\infty$ satisfy the following relations
\begin{gather}\label{f3_160}
\int_{\mathbb{R}} p_n(\mathcal{A})(1) \overline{ p_m(\mathcal{A})(1) } {\rm d}\sigma = \delta_{n,m},\qquad n,m\in\mathbb{Z}_+,
\end{gather}
where $\mathcal{A}$ is the model representation in $L^2_\sigma$ of the associated operator $A$.
\end{Theorem}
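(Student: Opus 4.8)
The plan is to push the orthonormality relation \eqref{f3_120}, which holds in $l_2$, forward to $L^2_\sigma$ through the intertwining map $U$ of \eqref{f3_140}. The whole argument rests on two properties of $U$: that it preserves inner products, and that it conjugates the polynomial functional calculus of $A$ into that of $\mathcal{A}$.

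First I would observe that $U$ is isometric on $l_{2,\mathrm{fin}}$. Indeed, since the polynomials $\{ r_n \}_{n=0}^\infty$ are orthonormal in $L^2_\sigma$ by \eqref{f3_130} and Favard's theorem, we have $(U \vec e_n, U \vec e_m)_{L^2_\sigma} = ([r_n],[r_m])_{L^2_\sigma} = \delta_{n,m} = (\vec e_n, \vec e_m)_{l_2}$, and by linearity this extends to an inner-product preserving map on all of $l_{2,\mathrm{fin}}$. Because $\deg r_n = n$, the classes $[r_0],\dots,[r_n]$ form a basis of the polynomials of degree at most $n$, so $U$ is a bijection of $l_{2,\mathrm{fin}}$ onto $\mathcal{P}$ and $U^{-1}$ is well defined on $\mathcal{P}$.

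Next I would verify that the similarity $\mathcal{A} = U A U^{-1}$ of \eqref{f3_150} is compatible with taking polynomials of the operators. Since $A$ maps $l_{2,\mathrm{fin}}$ into itself (each $w_n$ and the vector $\vec e_1 - \beta \vec e_0$ is finite) and $\mathcal{A}$ maps $\mathcal{P}$ into itself, the identity $\mathcal{A}^k = U A^k U^{-1}$ holds on $\mathcal{P}$ for every $k \in \mathbb{Z}_+$, whence $f(\mathcal{A}) = U f(A) U^{-1}$ on $\mathcal{P}$ for every $f \in \mathbb{P}$; in particular $p_n(\mathcal{A}) = U p_n(A) U^{-1}$. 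Using $r_0 = 1$, so that the constant function satisfies $(1) = [r_0] = U \vec e_0$, together with the relation \eqref{f3_110}, I would then compute
\begin{gather*}
p_n(\mathcal{A})(1) = U p_n(A) U^{-1} U \vec e_0 = U p_n(A) \vec e_0 = U \vec e_n = [r_n].
\end{gather*}
Substituting this into the left-hand side of \eqref{f3_160} and invoking either the orthonormality of $\{ r_n \}$ directly, or equivalently \eqref{f3_120} together with the isometry of $U$, gives
\begin{gather*}
\int_{\mathbb{R}} p_n(\mathcal{A})(1) \overline{ p_m(\mathcal{A})(1) } \, {\rm d}\sigma = ([r_n],[r_m])_{L^2_\sigma} = \delta_{n,m},
\end{gather*}
which is precisely \eqref{f3_160}.

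I expect the only delicate point to be the bookkeeping behind $f(\mathcal{A}) = U f(A) U^{-1}$: since $A$ and $\mathcal{A}$ are in general unbounded, one must confirm that every vector produced stays inside the invariant domains $l_{2,\mathrm{fin}}$ and $\mathcal{P}$, so that the repeated applications of $U$, $U^{-1}$ and the operators are legitimate and $U^{-1}U$ acts as the identity exactly where it is used. Once this is checked, the theorem amounts to the statement that the known $l_2$-orthonormality \eqref{f3_120} is transported isometrically onto $L^2_\sigma$ by $U$.
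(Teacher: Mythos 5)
Your proof is correct. The paper does not actually reprove Theorem~\ref{t3_1} (it is quoted from~\cite{cit_95000_Z}), but your argument is precisely the one implicit in the paper's framework: transport the $l_2$-orthonormality~\eqref{f3_120} through the isometry $U$ of~\eqref{f3_140}, using $p_n(\mathcal{A})(1) = U p_n(A)\vec e_0 = [r_n]$ via~\eqref{f3_110} and~\eqref{f3_150}; the domain bookkeeping you flag is indeed harmless, since $A$ preserves $l_{2,\mathrm{fin}}$ and $U$ is a bijection of $l_{2,\mathrm{fin}}$ onto $\mathcal{P}$ (injectivity following from $\int_{\mathbb{R}}|g|^2\,{\rm d}\sigma>0$ for nonzero polynomials $g$, which holds because the orthogonality measure of a Jacobi matrix has infinite support).
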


We have now recalled all basic notations and results which we shall need in the present paper. We introduce the following def\/inition.
\begin{Definition} \label{dd1_5} Let $\Theta = (J_3, J_5, \alpha, \beta)$ be a Jacobi-type pencil and $\{ p_n(\lambda) \}_{n=0}^\infty$ be the associated polynomials to $\Theta$. A sesquilinear functional $S(u,v)$, $u,v\in\mathbb{P}$, satisfying the following relation
\begin{gather*} S(p_n,p_m) = \delta_{n,m},\qquad n,m\in\mathbb{Z}_+, \end{gather*}
is said to be \textit{the spectral function of the Jacobi-type pencil $\Theta$}.
\end{Definition}

In a usual manner, we may introduce the corresponding direct and inverse spectral problems. \textit{The direct spectral problem for a Jacobi-type pencil~$\Theta$} consists in searching for answers on the following questions:
\begin{itemize}\itemsep=0pt
 \item[1)] Does the spectral function exist?
 \item[2)] If the spectral function exists, is it unique?
 \item[3)] If the spectral function exists, how to f\/ind it (or them)?
\end{itemize}

By relation~(\ref{f3_120}) we see that the spectral function always exists. By the linearity it follows that the spectral function is unique. It has the following representation
\begin{gather} \label{ff1_7}
S(u,v) = \big( u(A) \vec e_0, v(A) \vec e_0 \big)_{l_2},\qquad u,v\in\mathbb{P},
\end{gather}
where $A$ is the associated operator for $\Theta$. By~(\ref{ff1_7}) we obtain that
\begin{gather}\label{ff1_7_0}
\overline{S(u,v)} = S(v,u),\qquad u,v\in\mathbb{P},
\end{gather}
and
\begin{gather} \label{ff1_7_0_1}
S(u,u)\geq 0,\qquad u\in\mathbb{P}.
\end{gather}
Moreover, the following integral representation holds
\begin{gather} \label{ff1_7_1}
S(u,v) = \int_{\mathbb{R}} u(\mathcal{A}_\sigma)(1) \overline{ v(\mathcal{A}_\sigma)(1) } {\rm d}\sigma,\qquad u,v\in\mathbb{P}.
\end{gather}
Thus, the direct spectral problem for $\Theta$ is solved in full.

\textit{The inverse spectral problem for a Jacobi-type pencil $\Theta$} consists in searching for answers on the following questions:
\begin{itemize}\itemsep=0pt
\item[(a)] Is it possible to reconstruct the Jacobi-type pencil $\Theta = (J_3, J_5, \alpha, \beta)$ using its spectral function? If it is possible, what is the procedure of the reconstruction?
\item[(b)] What are necessary and suf\/f\/icient conditions for a sesquilinear functional $\sigma(u,v)$, $u,v\in\mathbb{P}$, to be the spectral function of a Jacobi-type pencil $\Theta = (J_3, J_5, \alpha, \beta)$?
\end{itemize}
Questions~(a) and (b) will be investigated in the next section.

\section{The inverse spectral problem for a Jacobi-type pencil}

An answer to the question~(b) of the inverse spectral problem is given by the following theorem.

\begin{Theorem}\label{tt2_1}A sesquilinear functional $S(u,v)$, $u,v\in\mathbb{P}$, satisfying relations~\eqref{ff1_7_0},~\eqref{ff1_7_0_1}, is the spectral function of a Jacobi-type pencil if and only if it admits the following integral representation
\begin{gather}\label{ff2_5}
S(u,v) = \int_{\mathbb{R}} u(\mathcal{A})(1) \overline{ v(\mathcal{A})(1) } {\rm d}\sigma,\qquad u,v\in\mathbb{P},
\end{gather}
where $\sigma$ is a non-negative measure on $\mathfrak{B}(\mathbb{R})$ with all finite power moments,
\begin{gather*} \int_\mathbb{R} {\rm d}\sigma = 1, \qquad \int_\mathbb{R} |g(x)|^2 {\rm d}\sigma > 0,
\end{gather*} for any non-zero complex polynomial~$g$, and $\mathcal{A}$ is a linear operator in~$L^2_\sigma$ with the following properties:
\begin{itemize}\itemsep=0pt
\item[$(i)$] $D(\mathcal{A}) = \mathcal{P}$;
\item[$(ii)$] for each $k\in\mathbb{Z}_+$ it holds
\begin{gather}\label{ff2_7}
\mathcal{A} x^k = \xi_{k,k+1} x^{k+1} + \sum_{j=0}^k \xi_{k,j} x^j,
\end{gather}
where $\xi_{k,k+1} > 0$, $\xi_{k,j}\in\mathbb{R}$, $0\leq j\leq k$;
\item[$(iii)$] the operator $\mathcal{A} \Lambda_0$ is symmetric, here by~$\Lambda_0$ we denote the operator of the multiplication by an independent variable in~$L^2_\sigma$ restricted to~$\mathcal{P}$.
\end{itemize}
\end{Theorem}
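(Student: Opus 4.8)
The plan is to prove the two implications separately. Necessity is essentially a verification resting on the results already recalled in Section~2, while the genuine work lies in the sufficiency direction, where a concrete pencil must be manufactured from the pair $(\sigma,\mathcal{A})$. For necessity, suppose $S$ is the spectral function of some $\Theta$. I would take $\sigma$ to be any orthogonality measure for the polynomials $\{r_n\}$ of \eqref{f3_130} (which exists by Favard's theorem) and set $\mathcal{A}=\mathcal{A}_\sigma=UAU^{-1}$. Then \eqref{ff2_5} is exactly \eqref{ff1_7_1}, so nothing new is needed for the representation itself. The measure properties are read off directly: $\int_{\mathbb{R}}\mathrm{d}\sigma=\|r_0\|_{L^2_\sigma}^2=1$; all moments are finite since orthonormal polynomials of every degree lie in $L^2_\sigma$; and $\int_{\mathbb{R}}|g|^2\mathrm{d}\sigma>0$ for nonzero $g$ because an infinite Jacobi matrix forces $\sigma$ to have infinite support. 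Property~$(i)$ is immediate from $D(\mathcal{A})=U(l_{2,\mathrm{fin}})=\mathcal{P}$. For $(iii)$ I would note that $U\vec e_n=[r_n]$ is isometric (the $r_n$ being orthonormal), that $UJ_3U^{-1}=\Lambda_0$ by the three-term recurrence, and hence $\mathcal{A}\Lambda_0=U(AJ_3)U^{-1}=UJ_5U^{-1}$; symmetry of $J_5$ together with the isometry of $U$ yields symmetry of $\mathcal{A}\Lambda_0$. Finally, $(ii)$ follows from the five-diagonal shape of $J_5$: applying $\mathcal{A}\Lambda_0$ to $r_n$ gives $\gamma_{n-2}r_{n-2}+\cdots+\gamma_n r_{n+2}$, and inserting $\Lambda_0 r_n=a_n r_{n+1}+b_n r_n+a_{n-1}r_{n-1}$ produces a recurrence that, with the base case $\mathcal{A}(1)=\frac{1}{\alpha}(r_1-\beta)$ coming from $A\vec e_0=\frac{1}{\alpha}(\vec e_1-\beta\vec e_0)$, shows by induction that $\mathcal{A} r_n$ has degree $n+1$ with positive leading coefficient; rewriting in the monomial basis gives \eqref{ff2_7}.

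For sufficiency, suppose $S$ admits the representation with data $(\sigma,\mathcal{A})$. I would first build the orthonormal polynomials $\{r_n\}$ of $\sigma$ by Gram--Schmidt (legitimate since $\sigma$ is positive on nonzero polynomials and has all moments), obtaining a three-term recurrence and hence a Jacobi matrix $J_3$ with $a_n>0$. Let $U\vec e_n=[r_n]$, an isometry with $UJ_3U^{-1}=\Lambda_0$. I would then define $J_5:=U^{-1}(\mathcal{A}\Lambda_0)U$, i.e.\ the matrix $c_{m,n}=(\mathcal{A}\Lambda_0 r_n,r_m)_{L^2_\sigma}$. By $(ii)$ the polynomial $\mathcal{A}\Lambda_0 r_n$ has degree $n+2$, so $c_{m,n}=0$ for $m>n+2$; by $(iii)$ the matrix is real symmetric, whence also $c_{m,n}=0$ for $m<n-2$; and the positive leading coefficient in $(ii)$ makes $c_{n+2,n}>0$. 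Thus $J_5$ is exactly a five-diagonal real symmetric matrix with positive second subdiagonal. I would then read $\alpha,\beta$ off the degree-one polynomial $\mathcal{A}(1)=\xi_{0,1}x+\xi_{0,0}$: writing $x=a_0 r_1+b_0$ and matching with the prescription $A\vec e_0=\frac{1}{\alpha}(\vec e_1-\beta\vec e_0)$ forces $\alpha=1/(\xi_{0,1}a_0)>0$ together with a corresponding real $\beta$, so that $\Theta=(J_3,J_5,\alpha,\beta)$ is a genuine Jacobi-type pencil.

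It remains to check that $\Theta$ has spectral function $S$, and the crux is that $A:=U^{-1}\mathcal{A} U$ coincides with the associated operator of $\Theta$ from \eqref{f3_60}. Indeed, $\{\vec e_0\}\cup\{u_n=J_3\vec e_n\}$ is a basis of $l_{2,\mathrm{fin}}$ (since $a_n>0$ makes it triangular against $\{\vec e_n\}$), so the associated operator is the unique linear map sending $\vec e_0\mapsto\frac{1}{\alpha}(\vec e_1-\beta\vec e_0)$ and satisfying $AJ_3=J_5$; by construction $A=U^{-1}\mathcal{A} U$ meets both conditions, the second because $AJ_3=U^{-1}\mathcal{A}\Lambda_0 U=J_5$. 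Hence $\mathcal{A}=UAU^{-1}=\mathcal{A}_\sigma$ is the model representation of the associated operator, and \eqref{ff1_7_1} identifies the spectral function of $\Theta$ with $\int_{\mathbb{R}} u(\mathcal{A})(1)\overline{v(\mathcal{A})(1)}\,\mathrm{d}\sigma=S(u,v)$, as required. The main obstacle I anticipate is precisely this sufficiency construction: one must extract the pentadiagonal band simultaneously from the degree-raising hypothesis $(ii)$ and from the two-sided banding forced by the symmetry $(iii)$, secure strict positivity on the second subdiagonal, and only then carry out the delicate identification of the abstractly defined $U^{-1}\mathcal{A} U$ with the intrinsically defined associated operator that closes the argument.
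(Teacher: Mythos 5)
Your proposal is correct, and its overall architecture coincides with the paper's: necessity via the model representation $\mathcal{A}_\sigma=UAU^{-1}$ together with~\eqref{ff1_7_1}, and sufficiency by building $J_3$ from $\sigma$, $J_5$ from the matrix $(\mathcal{A}\Lambda_0[\mathbf{r}_n],[\mathbf{r}_m])$, choosing $\alpha,\beta$ so that the new pencil's data match $\mathcal{A}(1)$, and then identifying the resulting model operator with $\mathcal{A}$. Two of your intermediate steps, however, genuinely diverge from the paper's. First, to verify property~$(ii)$ in the necessity part the paper proves a separate lemma (its Lemma~\ref{ll2_1}) about the associated operator, deducing $A\vec e_n=f_{n,n+1}\vec e_{n+1}+\cdots$ with $f_{n,n+1}>0$ from the imported identity $\vec e_n=p_n(A)\vec e_0$ through a two-stage induction on $A^n\vec e_0$; you instead run a direct induction on the recurrence $a_n\mathcal{A}[r_{n+1}]=\gamma_n[r_{n+2}]+\cdots-a_{n-1}\mathcal{A}[r_{n-1}]-b_n\mathcal{A}[r_n]$ obtained from $\mathcal{A}\Lambda_0=UJ_5U^{-1}$ and the three-term recurrence, with base case $\mathcal{A}[1]=\frac{1}{\alpha}(r_1-\beta)$. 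This reaches the same intermediate fact (that $\mathcal{A}[r_n]$ is real of degree $n+1$ with positive leading coefficient) more directly and without invoking the external relation~\eqref{f3_110}, at the mild cost of using the band structure of $J_5$ explicitly. Second, in the sufficiency part the paper obtains $\alpha,\beta$ by Gram--Schmidt in the pre-Hilbert space defined by $S$ (which is why it first proves $S(u,u)=0\Rightarrow u\equiv0$ and computes $\|\lambda-S(x,1)\|^2=\xi_{0,1}^2\Delta_1$), whereas you read $\alpha,\beta$ off by matching $\mathcal{A}(1)=\xi_{0,1}x+\xi_{0,0}$ with $\frac{1}{\alpha}(r_1-\beta)$; since $r_1=(x-s_1)/\sqrt{\Delta_1}$ this produces exactly the paper's values~\eqref{ff2_20_5}, and it lets you bypass the positive-definiteness discussion, which is not needed for the conclusion. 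Your closing identification via the basis $\{\vec e_0\}\cup\{J_3\vec e_n\}$ of $l_{2,\mathrm{fin}}$ is the $U$-conjugate of the paper's argument that $\mathcal{A}_\sigma$ and $\mathcal{A}$ agree on $[1]$ and on $\Lambda_0\mathcal{P}$, so that step is equivalent in substance.
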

\begin{proof} \textit{Necessity.} Let $\Theta = ( J_3, J_5, \alpha, \beta )$ be a Jacobi-type pencil and $A$ be the associated operator of~$\Theta$. Def\/ine polynomials $\{ r_n(\lambda) \}$, the measure $\sigma$, the operators $U$ and $\mathcal{A}=\mathcal{A}_\sigma$ as in the introduction, see~(\ref{f3_130}), (\ref{f3_140}), (\ref{f3_150}). Let $S(u,v)$ be the spectral function of the pencil~$\Theta$. It has an integral representation~(\ref{ff1_7_1}). It remains to check that $\sigma$ and $\mathcal{A}$ possess all the required properties in the statement of the theorem.

Since $\sigma$ is generated by the Jacobi matrix $J_3$, it has all required properties as in the statement of the theorem. The operator~$\mathcal{A}_\sigma$ is linear and def\/ined on~$\mathcal{P}$.

\begin{Lemma}\label{ll2_1} The associated operator $A$ of a Jacobi-type pencil $\Theta$ has the following properties:
\begin{itemize}\itemsep=0pt
\item[$1)$] for each $n\in\mathbb{Z}_+$ it holds
\begin{gather} \label{ff2_10}
\vec e_n = d_{n,n} A^n \vec e_0 + \sum_{j=0}^{n-1} d_{n,j} A^j \vec e_0,
\end{gather}
where $d_{n,n} > 0$, $d_{n,j}\in\mathbb{R}$, $0\leq j\leq n-1$;

\item[$2)$] for each $n\in\mathbb{Z}_+$ it holds
\begin{gather}\label{ff2_12}
A^n \vec e_0 = c_{n,n} \vec e_n + \sum_{j=0}^{n-1} c_{n,j} \vec e_j,
\end{gather}
where $c_{n,n} > 0$, $c_{n,j}\in\mathbb{R}$, $0\leq j\leq n-1$;

\item[$3)$] for each $n\in\mathbb{Z}_+$ it holds
\begin{gather}\label{ff2_14}
A \vec e_n = f_{n,n+1} \vec e_{n+1} + \sum_{j=0}^{n} f_{n,j} \vec e_j,
\end{gather}
where $f_{n,n+1} > 0$, $f_{n,j}\in\mathbb{R}$, $0\leq j\leq n$.
\end{itemize}
\end{Lemma}

\begin{proof}[Proof of Lemma~\ref{ll2_1}] Let $p_n(\lambda) = \sum\limits_{j=0}^n d_{n,j} \lambda^j$, $d_{n,n}>0$, $d_{n,j}\in\mathbb{R}$, be the associated polyno\-mials of the pencil. By~(\ref{f3_110}) we conclude that relation~(\ref{ff2_10}) holds.

Let us check relation~(\ref{ff2_12}) by the induction argument. For the cases $n=0,1$ relation~(\ref{ff2_12}) obviously holds, see the def\/inition of $A$. Assume that~(\ref{ff2_12}) holds for $n=1,2,\dots ,r$, $r\in\mathbb{N}$. By~(\ref{ff2_10}) with $n=r+1$ we may write
\begin{gather*} A^{r+1} \vec e_0 = \frac{1}{d_{r+1,r+1}} \vec e_{r+1} - \frac{1}{d_{r+1,r+1}} \sum_{j=0}^{r} d_{r+1,j} A^j \vec e_0 \\
\hphantom{A^{r+1} \vec e_0}{}
= \frac{1}{d_{r+1,r+1}} \vec e_{r+1} + \sum_{j=0}^{r} \sum_{m=0}^{j} \frac{ (-1) d_{r+1,j} c_{j,m} }{d_{r+1,r+1}}\vec e_m.
\end{gather*}
Finally, in order to prove relation~(\ref{ff2_14}) we apply the operator $A$ to the both sides of relation~(\ref{ff2_10}) and use~(\ref{ff2_12})
\begin{gather*} A \vec e_n = d_{n,n} A^{n+1} \vec e_0 + \sum_{j=0}^{n-1} d_{n,j} A^{j+1} \vec e_0 \\
\hphantom{A \vec e_n}{}
= d_{n,n} c_{n+1,n+1} \vec e_{n+1} + \sum_{j=0}^{n} d_{n,n} c_{n+1,j} \vec e_j + \sum_{j=0}^{n-1} \sum_{k=0}^{j+1} d_{n,j} c_{j+1,k} \vec e_k,\qquad n\in\mathbb{Z}_+.
\end{gather*}
Lemma~\ref{ll2_1} is proved.
\end{proof}

Return to the proof of Theorem~\ref{tt2_1}. Choose an arbitrary $k\in\mathbb{Z}_+$. Observe that
\begin{gather*} x^k = \sum_{j=0}^k g_{k,j} r_j(x), \qquad g_{k,j}\in\mathbb{R},\quad g_{k,k}>0. \end{gather*}
By~(\ref{ff2_14}) we may write
\begin{gather} \mathcal{A}_\sigma \big[ x^k \big] = \sum_{j=0}^k g_{k,j} U A \vec e_j = \sum_{j=0}^k g_{k,j} \left(
f_{j,j+1} [r_{j+1}] + \sum_{l=0}^j f_{j,l} [r_l] \right) \notag \\
\hphantom{\mathcal{A}_\sigma \big[ x^k \big]}{} = g_{k,k} f_{k,k+1} [r_{k+1}] + \left[
\sum_{j=0}^{k-1} g_{k,j} f_{j,j+1} r_{j+1} + \sum_{j=0}^k g_{k,j} \sum_{l=0}^j f_{j,l} r_l\right] \notag \\
\hphantom{\mathcal{A}_\sigma \big[ x^k \big]}{}
= \left[ g_{k,k} f_{k,k+1} \widetilde{\mu_{k+1,k+1}} x^{k+1} + g_{k,k} f_{k,k+1} \sum_{m=0}^k \widetilde{\mu_{k+1,m}} x^{m} \right. \notag \\
\left.\hphantom{\mathcal{A}_\sigma \big[ x^k \big]=}{} +
\sum_{j=0}^{k-1} g_{k,j} f_{j,j+1} r_{j+1} + \sum_{j=0}^k g_{k,j} \sum_{l=0}^j f_{j,l} r_l \right], \label{ff2_15}
\end{gather}
where we set $r_l(\lambda) = \sum\limits_{j=0}^l \widetilde \mu_{l,j} \lambda^j$, $\widetilde\mu_{l,l}>0$, $\mu_{l,j}\in\mathbb{R}$. Since we get a~real polynomial of degree $k+1$ on the right-hand side of~(\ref{ff2_15}) and it has a positive leading coef\/f\/icient $g_{k,k} f_{k,k+1} \widetilde{\mu_{k+1,k+1}}$, then relation~(\ref{ff2_7}) is proved.

It remains to verify~$(iii)$. For arbitrary $n,m\in\mathbb{Z}_+$ we may write
\begin{gather}
(\mathcal{A}_\sigma\Lambda_0 [r_n(x)], [r_m(x)]) = \big(UAU^{-1} [a_{n-1} r_{n-1}(x) + b_n r_{n}(x) + a_n r_{n+1}(x)] , [r_m(x)]\big) \notag \\
\hphantom{(\mathcal{A}_\sigma\Lambda_0 [r_n(x)], [r_m(x)])}{}
 = (A J_3 \vec e_n, \vec e_m) = (J_5 \vec e_n, \vec e_m) = (\vec e_n, J_5 \vec e_m) \notag \\
\hphantom{(\mathcal{A}_\sigma\Lambda_0 [r_n(x)], [r_m(x)])}{}
= (\vec e_n, A (a_{m-1} \vec e_{m-1} + b_m \vec e_m + a_m \vec e_{m+1})) \nonumber\\
\hphantom{(\mathcal{A}_\sigma\Lambda_0 [r_n(x)], [r_m(x)])}{} = ([r_n(x)], \mathcal{A}_\sigma\Lambda_0 [r_m(x)]). \label{ff2_16}
\end{gather}
By the linearity we conclude that $\mathcal{A}_\sigma\Lambda_0$ is symmetric.

\textit{Sufficiency.} Suppose that a sesquilinear functional $S(u,v)$, $u,v\in\mathbb{P}$, satisfying relations~(\ref{ff1_7_0}), (\ref{ff1_7_0_1}), is given. Assume that it has the integral representation~(\ref{ff2_5}) where $\sigma$ is a non-negative probability measure on~$\mathfrak{B}(\mathbb{R})$ with all f\/inite moments, $\int_\mathbb{R} |g(x)|^2 {\rm d}\sigma > 0$, for any non-zero complex polynomial~$g$, and $\mathcal{A}$ is a linear operator in $L^2_\sigma$ with properties~$(i)$--$(iii)$. By condition~$(ii)$ and the induction argument it can be directly verif\/ied that for each $n\in\mathbb{Z}_+$ it holds
\begin{gather*} \mathcal{A}^n [1] = a_{n,n} [x^n] + \sum_{j=0}^{n-1} a_{n,j} \big[x^j\big], \end{gather*}
where $a_{n,n} > 0$, $a_{n,j}\in\mathbb{R}$, $0\leq j\leq n-1$.

Suppose that $S(u,u)=0$ for a complex polynomial $u$. Then
\begin{gather*} 0 = \| u(\mathcal{A})(1) \|^2_{L^2_\sigma}. \end{gather*}
Assume that $u$ is nonzero, $u(\lambda) = \sum\limits_{k=0}^n d_k \lambda^k$, $d_n\not=0$, $d_k\in\mathbb{C}$, $n\in\mathbb{Z}_+$. Observe that
\begin{gather*}
u(\mathcal{A})(1) = \left( \sum_{k=0}^n d_k \mathcal{A}^k \right) [1] = d_n \mathcal{A}^n [1] + \sum_{k=0}^{n-1} d_k \mathcal{A}^k [1] \\
\hphantom{u(\mathcal{A})(1)}{} = d_n \left( a_{n,n} \big[x^n\big] + \left[ \sum_{j=0}^{n-1} a_{n,j} x^j\right]\right)
+ \sum_{k=0}^{n-1} d_k \sum_{j=0}^k a_{k,j} \big[x^j\big] \\
\hphantom{u(\mathcal{A})(1)}{}
= \left[ d_n a_{n,n} x^n + d_n \sum_{j=0}^{n-1} a_{n,j} x^j + \sum_{k=0}^{n-1} \sum_{j=0}^k d_k a_{k,j} x^j \right] =: [r(x)].
\end{gather*}
We have a nonzero polynomial $r$ (of degree $n$) with $\| r \|_{L^2_\sigma} = 0$. This contradicts to our assumptions on the measure $\sigma$. Consequently, $u\equiv 0$.

The functional $S$ def\/ines an inner product on~$\mathbb{P}$. Thus, the complex vector space $\mathbb{P}$ becomes a space $H$ with a scalar product. It is a normed space with the norm $\| p \| = \sqrt{S(p,p)}$. We shall not need its completion. Set
\begin{gather*} \mathbf{p}_0(\lambda) = \frac{1}{ \| 1 \|_{H} } =1,\qquad
 \mathbf{p}_1(\lambda) = \frac{\lambda - S(x,1) 1}{ \| \lambda - S(x,1) 1 \|_{H} }. \end{gather*}
Let us check that $\mathbf{p}_1(\lambda)$ is well-def\/ined. Denote by $\{ s_k \}_{k=0}^\infty$ the power moments of $\sigma$
\begin{gather*} s_k = \int_\mathbb{R} x^k {\rm d}\sigma,\qquad k\in\mathbb{Z}_+. \end{gather*}
Let
\begin{gather*} \Delta_n := \det (s_{k+l})_{k,l=0}^n,\qquad n\in\mathbb{Z}_+,\qquad \Delta_{-1}:=1, \end{gather*}
be the corresponding Hankel determinants. Observe that
\begin{gather*} S(x,1) = \int \mathcal{A} (1) {\rm d}\sigma = \int (\xi_{0,1}\lambda + \xi_{0,0}) {\rm d}\sigma
= \xi_{0,1} s_1 + \xi_{0,0}\in\mathbb{R}, \\
 S(x,x) = \int \mathcal{A} (1) \overline{\mathcal{A} (1)} {\rm d}\sigma = \int (\xi_{0,1}\lambda + \xi_{0,0})^2 {\rm d}\sigma = \xi_{0,1}^2 s_2 + 2\xi_{0,1}\xi_{0,0} s_1 + \xi_{0,0}^2.
\end{gather*}
We may write
\begin{gather*} \| \lambda - S(x,1) 1 \|_{H}^2 = S(\lambda - S(x,1) 1, \lambda - S(x,1) 1) =
S(\lambda,\lambda) - (S(\lambda,1))^2 \\
 \hphantom{\| \lambda - S(x,1) 1 \|_{H}^2}{} = \xi_{0,1}^2 \big(s_2 - s_1^2\big) = \xi_{0,1}^2 \Delta_1 > 0,
\end{gather*}
where we have used our assumptions on the measure $\sigma$. Then
\begin{gather*} \mathbf{p}_1(\lambda) = \frac{\lambda - \xi_{0,1} s_1 - \xi_{0,0}}{ \xi_{0,1}\sqrt{\Delta_1} }. \end{gather*}
Set
\begin{gather}\label{ff2_20_5}
\alpha = \frac{1}{ \xi_{0,1}\sqrt{\Delta_1} },\qquad \beta = -\frac{\xi_{0,1} s_1 + \xi_{0,0}}{ \xi_{0,1}\sqrt{\Delta_1} }.
\end{gather}

Let $\{ \mathbf{r}_n(\lambda) \}_{n=0}^\infty$ be orthonormal polynomials with respect to the measure $\sigma$ (having positive leading coef\/f\/icients). Denote by $J_3$ the corresponding Jacobi matrix (formed by the recurrence coef\/f\/icients of $\mathbf{r}_n$). Denote
\begin{gather*} J_5 = (g_{m,n})_{m,n=0}^\infty,\qquad g_{m,n} := (\mathcal{A} \Lambda_0 [ \mathbf{r}_n(\lambda)], [ \mathbf{r}_m(\lambda)])_{L^2_\sigma}. \end{gather*}
By condition~$(iii)$ of the theorem we conclude that $J_5$ is a symmetric semi-inf\/inite matrix. Let
\begin{gather*} \mathbf{r}_n(\lambda) = \sum_{k=0}^n \eta_{n,k} \lambda^k,\qquad \eta_{n,k}\in\mathbb{R},\quad \eta_{n,n} > 0,\quad n\in\mathbb{Z}_+. \end{gather*}
For an arbitrary $n\in\mathbb{Z}_+$ by condition~$(ii)$ we may write
\begin{gather}\label{ff2_25}
\mathcal{A} \Lambda_0 [ \mathbf{r}_n(\lambda)] = \sum_{k=0}^n \eta_{n,k} \mathcal{A} \big[\lambda^{k+1}\big] = \big[
\eta_{n,n} \xi_{n+1,n+2} \lambda^{n+2} + d_{n+1}(\lambda) \big],
\end{gather}
where $d_{n+1}(\lambda)$ is a zero polynomial or a polynomial with real coef\/f\/icients, $\deg p\leq n+1$. Then
\begin{gather*} g_{m,n} = 0,\qquad m,n\in\mathbb{Z}_+, \quad m>n+2, \end{gather*}
and
\begin{gather*} g_{n+2,n} = \eta_{n,n} \xi_{n+1,n+2} \big(\big[\lambda^{n+2}\big], [r_{n+2}(\lambda)]\big) =
\frac{\eta_{n,n} \xi_{n+1,n+2}}{\eta_{n+2,n+2}} > 0,\qquad n\in\mathbb{Z}_+. \end{gather*}
By~(\ref{ff2_25}) we also see that $g_{m,n}$ are real numbers. We conclude that $J_5$ is real f\/ive-diagonal and it has positive numbers on the second sub-diagonal.

Consider a Jacobi-type pencil $\widetilde\Theta = (J_3, J_5, \alpha,\beta)$. Let $\{ p_n(\lambda) \}_{n=0}^\infty$ be the associated polynomials to $\widetilde\Theta$. For the pencil $\widetilde\Theta$ we def\/ine the standard objects from the introduction. The polynomials~$r_n(\lambda)$ (see~(\ref{f3_130})) coincide with $\mathbf{r}_n(\lambda)$. We may also choose the given $\sigma$ as the orthogonality measure (the orthogonality measure can be non-unique). The operators~$A$,~$U$ and~$\mathcal{A}_\sigma$ we def\/ine in the standard way, see~(\ref{f3_60}), (\ref{f3_140}), (\ref{f3_150}). We only can not use the brief notation~$\mathcal{A}$ for~$\mathcal{A}_\sigma$, since~$\mathcal{A}$ already denotes the operator in the integral representation of~$S$.

For the pencil $\widetilde\Theta$ we may apply our arguments in the proved Necessity. By relation~(\ref{ff2_16}) we may write
\begin{gather*} (\mathcal{A}_\sigma \Lambda_0 [r_n],[r_m])_{L^2_\sigma} = (J_5 \vec e_n, \vec e_m)_{l_2} = g_{m,n} = (\mathcal{A} \Lambda_0 [r_n],[r_m])_{L^2_\sigma},\qquad n,m\in\mathbb{Z}_+. \end{gather*}
Here the last equality follows by the def\/inition of the matrix $J_5$. Therefore
\begin{gather}\label{ff2_30}
\mathcal{A}_\sigma [ \lambda p(\lambda) ] = \mathcal{A} [ \lambda p(\lambda) ],\qquad \forall\, p\in\mathbb{P}.
\end{gather}
Notice that
\begin{gather}\label{ff2_32}
\mathcal{A}_\sigma [ 1 ] = U A \vec e_0 =\frac{1}{\alpha} U (\vec e_1 - \beta \vec e_0)=\left[ \frac{1}{\alpha} (r_1(\lambda) - \beta) \right].
\end{gather}
The orthonormal polynomial $r_1(\lambda)$ has the following form
\begin{gather}\label{ff2_34}
r_1(\lambda) = \frac{1}{\sqrt{\Delta_1}} (\lambda - s_1).
\end{gather}
By~(\ref{ff2_32}), (\ref{ff2_34}) and (\ref{ff2_20_5}) we conclude that
\begin{gather*} \mathcal{A}_\sigma [ 1 ] = [ \xi_{0,1} \lambda + \xi_{0,0} ]. \end{gather*}
On the other hand, by property $(ii)$ we have $\mathcal{A} [ 1 ] = [ \xi_{0,1} \lambda + \xi_{0,0} ]$. Therefore
\begin{gather}\label{ff2_40}
\mathcal{A}_\sigma [ 1 ] = \mathcal{A} [ 1 ].
\end{gather}
Relations~(\ref{ff2_30}) and~(\ref{ff2_40}) show that $\mathcal{A}_\sigma = \mathcal{A}$. Comparing relations~(\ref{ff1_7_1}) and~(\ref{ff2_5}) we see that the spectral function of $\widetilde\Theta$ coincides with~$S$.
\end{proof}

Theorem~\ref{tt2_1} provides characteristic properties for the model representation $\mathcal{A}$ of the asso\-ciated operator of a Jacobi-type pencil. It is seen that these properties are close to the properties of the multiplication operator $\Lambda_0$. Of course, $\Lambda_0$ itself satisf\/ies properties $(i)$--$(iii)$.

\begin{Corollary}\label{cc2_1} Let $\sigma$ be a non-negative measure on $\mathfrak{B}(\mathbb{R})$ with all finite power moments, $\int_\mathbb{R} {\rm d}\sigma = 1$, $\int_\mathbb{R} |g(x)|^2 {\rm d}\sigma > 0$, for any non-zero complex polynomial~$g$. A~linear operator~$\mathcal{A}$ in~$L^2_\sigma$ is a model representation in $L^2_\sigma$ of the associated operator of a Jacobi-type pencil if and only if properties $(i)$--$(iii)$ of Theorem~{\rm \ref{tt2_1}} hold.
\end{Corollary}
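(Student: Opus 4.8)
The plan is to derive Corollary~\ref{cc2_1} as a direct consequence of Theorem~\ref{tt2_1}, exploiting the fact that the spectral function and the pair $(\sigma,\mathcal{A})$ carry essentially the same information. First I would observe that, given a Jacobi-type pencil $\Theta$, the model representation $\mathcal{A}_\sigma$ of its associated operator satisfies properties $(i)$--$(iii)$: this is exactly what was established in the Necessity half of the proof of Theorem~\ref{tt2_1}. Indeed, property $(i)$ is immediate since $\mathcal{A}_\sigma = UAU^{-1}$ with $D(A) = l_{2,\mathrm{fin}}$ mapped onto $\mathcal{P}$; property $(ii)$ follows from the computation~\eqref{ff2_15}, which shows that $\mathcal{A}_\sigma$ raises the degree of $x^k$ by exactly one with a positive leading coefficient; and property $(iii)$ follows from the symmetry chain~\eqref{ff2_16} establishing that $\mathcal{A}_\sigma \Lambda_0$ is symmetric. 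This handles the forward direction without essentially any new work.

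For the converse, I would take a linear operator $\mathcal{A}$ in $L^2_\sigma$ satisfying $(i)$--$(iii)$ and reconstruct a Jacobi-type pencil whose model representation is precisely $\mathcal{A}$. The natural approach is to define the sesquilinear functional
\begin{gather*} S(u,v) = \int_{\mathbb{R}} u(\mathcal{A})(1) \overline{ v(\mathcal{A})(1) } {\rm d}\sigma,\qquad u,v\in\mathbb{P}, \end{gather*}
and verify that it satisfies~\eqref{ff1_7_0} and~\eqref{ff1_7_0_1}; symmetry of $S$ under conjugation and nonnegativity of $S(u,u)$ both follow immediately from the integral representation, since the integrand in $S(u,u)$ is $|u(\mathcal{A})(1)|^2$. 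With these two properties verified, $S$ meets the hypotheses of Theorem~\ref{tt2_1}, and the Sufficiency half of that theorem produces a Jacobi-type pencil $\widetilde\Theta = (J_3, J_5, \alpha, \beta)$ whose spectral function is $S$.

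The remaining point is to confirm that the model representation associated to this reconstructed pencil $\widetilde\Theta$ is the \emph{same} operator $\mathcal{A}$ we started with, not merely one giving the same spectral function. Here I would invoke the concluding argument of the Sufficiency proof: relations~\eqref{ff2_30} and~\eqref{ff2_40} show that the canonically constructed model representation $\mathcal{A}_\sigma$ agrees with $\mathcal{A}$ on $[1]$ and on every $[\lambda p(\lambda)]$, and since these span $\mathcal{P}$ we get $\mathcal{A}_\sigma = \mathcal{A}$. Thus $\mathcal{A}$ is exactly the model representation of the associated operator of $\widetilde\Theta$, completing the equivalence.

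The main obstacle I anticipate is not any single hard estimate but rather the bookkeeping needed to ensure that the operator $\mathcal{A}$ fed into the construction is recovered unchanged at the end --- that is, that the passage $\mathcal{A} \mapsto S \mapsto \widetilde\Theta \mapsto \mathcal{A}_{\sigma}$ closes up with $\mathcal{A}_\sigma = \mathcal{A}$. Since Theorem~\ref{tt2_1} already isolates this identification in~\eqref{ff2_30}--\eqref{ff2_40}, the corollary should follow cleanly once I am careful that the measure $\sigma$ and the orthonormal polynomials $r_n = \mathbf{r}_n$ used in building $\widetilde\Theta$ are the very ones attached to the given $L^2_\sigma$. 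The proof is therefore essentially a repackaging of Theorem~\ref{tt2_1}, phrased so that the operator $\mathcal{A}$ and the measure $\sigma$, rather than the functional $S$, are the primary data.
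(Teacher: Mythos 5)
Your proposal is correct and matches the paper's intent exactly: the paper's proof of this corollary is simply ``It follows directly from our constructions in the proof of Theorem~\ref{tt2_1}'', and you have spelled out precisely those constructions --- the Necessity half for the forward direction, and the Sufficiency half (with the identification $\mathcal{A}_\sigma = \mathcal{A}$ via~(\ref{ff2_30}) and~(\ref{ff2_40})) for the converse. No gaps.
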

\begin{proof} It follows directly from our constructions in the proof of Theorem~\ref{tt2_1}.
\end{proof}

Let $\Theta=(J_3,J_5,\alpha,\beta)$ be a Jacobi-type pencil and $\mathcal{A}$ be a model representation in $L^2_\sigma$ of the associated operator of $\Theta$. By the latter corollary we conclude that $\mathcal{A} \Lambda_0$ is symmetric
\begin{gather}\label{ff2_45}
(\mathcal{A} \Lambda_0 [u(\lambda)], [v(\lambda)])_{L^2_\sigma} = ([u(\lambda)], \mathcal{A} \Lambda_0 [v(\lambda)])_{L^2_\sigma},\qquad
u,v\in\mathbb{P}.
\end{gather}
Suppose that the orthogonality measure $\sigma$ is supported inside a f\/inite real segment $[a,b]$, $0 < a < b < +\infty$, i.e.,
$\sigma(\mathbb{R}\backslash[a,b]) = 0$.
In this case the operator $\Lambda$ of the multiplication by an independent variable has a bounded inverse on the whole~$L^2_\sigma$. By~(\ref{ff2_45}) we may write
\begin{gather*} \big(\Lambda^{-1} \mathcal{A} [\lambda u(\lambda)], [\lambda v(\lambda)]\big)_{L^2_\sigma} =
\big(\Lambda^{-1} [\lambda u(\lambda)], \mathcal{A} [\lambda v(\lambda)]\big)_{L^2_\sigma},\qquad u,v \in\mathbb{P}. \end{gather*}
Denote $\mathcal{P}_0 = \Lambda \mathcal{P}$ and $\mathcal{A}_0 = \mathcal{A}|_{\mathcal{P}_0}$. Then
\begin{gather*} \big(\Lambda^{-1} \mathcal{A}_0 f,g\big)_{L^2_\sigma} = \big(\Lambda^{-1} f, \mathcal{A}_0 g\big)_{L^2_\sigma},\qquad f,g \in\mathcal{P}_0. \end{gather*}
Thus, in this case \textit{$\mathcal{A}_0$ is symmetric with respect to the form $(\Lambda^{-1}\cdot,\cdot)_{L^2_\sigma}$}. Analogous arguments were used in the theory of operator pencils, see~\cite[Chapter~IV, p.~163]{cit_7000_Markus}.

\begin{Example}\label{ee2_1} Let $a_n = 1$, $b_n = c$, $c>2$, $\alpha_n\in\mathbb{R}$, $\beta_n = 0$, $\gamma_n = 1$, $n\in\mathbb{Z}_+$. Def\/ine~$J_3$ and~$J_5$ by~(\ref{f1_5}),~(\ref{f1_10}) with the above parameters. Consider a Jacobi-type pencil $\Theta = (J_3,J_5,\alpha,\beta)$, with arbitrary $\alpha > 0$ and $\beta\in\mathbb{R}$. In this case
\begin{gather*} r_n(x) = U_n\left( \frac{x-c}{2} \right),\qquad n\in\mathbb{Z}_+, \end{gather*}
where $U_n(t) = \frac{\sin((n+1)\arccos t)}{ \sqrt{1-t^2} }$ is Chebyshev's polynomial of the second kind. The orthonormality relations for $r_n(x)$ have the following form
\begin{gather*} \int_{c-2}^{c+2} r_n(\lambda) r_m(\lambda) \sqrt{1 - \left( \frac{\lambda - c}{2} \right)^2} \frac{1}{\pi} {\rm d}\lambda
= \delta_{n,m},\qquad n,m\in\mathbb{Z}_+. \end{gather*}
By the recurrence relation~(\ref{f1_30}) we calculate
\begin{gather*} p_2(\lambda) = \alpha \lambda^2 + (c+\beta) \lambda - \alpha_0. \end{gather*}
Since $(c+\beta)^2 + 4 \alpha\alpha_0$ can be made zero or negative by a proper choice of~$\alpha_0$, then we have a~non-classical case. Thus, this case is worthy of an additional investigation which will be done elsewhere.
\end{Example}

We can state the following \textit{moment problem for Jacobi-type pencils}: f\/ind a non-negative measure $\sigma$ on $\mathfrak{B}(\mathbb{R})$ with all f\/inite power moments, $\int_\mathbb{R} {\rm d}\sigma = 1$, $\int_\mathbb{R} |g(x)|^2 {\rm d}\sigma > 0$, for any non-zero complex polynomial $g$, and a linear operator $\mathcal{A}$ in $L^2_\sigma$ with properties $(i)$--$(iii)$ of Theorem~\ref{tt2_1} such that
 \begin{gather*} \int_{\mathbb{R}} \mathcal{A}^m(1) \overline{ \mathcal{A}^n(1) } {\rm d}\sigma = s_{m,n},\qquad m,n\in\mathbb{Z}_+, \end{gather*}
where $\{ s_{m,n} \}_{m,n=0}^\infty$ is a prescribed set of real numbers (called moments).

As usual, there appear three important questions: 1)~the solvability of the moment problem; 2)~the uniqueness of a solution (the determinateness); 3)~a description of all solutions. This moment problem will be studied elsewhere.

\section[A special perturbation of orthogonal polynomials on a f\/inite interval]{A special perturbation of orthogonal polynomials \\ on a f\/inite interval}

If we look at the orthogonality relations~(\ref{f3_160}) or at the representation of the spectral function~(\ref{ff1_7_1}), we can see that we need to calculate a polynomial of the operator~$\mathcal{A}$. However, we do not have at hand any functional calculus for the operator~$\mathcal{A}$. It remains to calculate the powers of~$\mathcal{A}$ recurrently. It would be helpful to omit this procedure and to have a more transparent relation. It is possible to do this in the following special case.

Consider a Jacobi-type pencil $\Theta = (J_3,J_5,\alpha,\beta)$, with $J_3$, $J_5$ def\/ined by~(\ref{f1_5}),~(\ref{f1_10}), satisfying the following conditions
\begin{gather}
\sigma(\mathbb{R}\backslash[-c,c]) = 0,\qquad 0<c<1, \label{ff3_5} \\
J_5 = a J_3^2 + b J_3 + d \operatorname{diag} (1,0,0,0,\dots ),\qquad a>0,\quad b,d\in\mathbb{R}, \notag \\
\alpha = \frac{1}{a a_0},\qquad \beta = -\frac{b_0}{a_0} - \frac{b}{a a_0}. \label{ff3_15}
\end{gather}
Here $\sigma$, as before (see the introduction), is the orthogonality measure for polynomials $\{ r_n(x) \}_{n=0}^\infty$ (related to $J_3$).

Introduce other related objects from the introduction: the space $L^2_\sigma$, the operators~$A$,~$U$ and $\mathcal{A} = UAU^{-1}$. Observe that
\begin{gather*} U J_3 U^{-1} [p(x)] = [xp(x)],\qquad p\in\mathbb{P}. \end{gather*}
For an arbitrary $n\in\mathbb{Z}_+$ we may write
\begin{gather*} \mathcal{A} [x r_n(x)] = UAU^{-1} U J_3 U^{-1} [r_n(x)] = UAJ_3 \vec e_n = U J_5 \vec e_n \\
\hphantom{\mathcal{A} [x r_n(x)]}{} = U \big( a J_3^2 + b J_3 + d \operatorname{diag} (1,0,0,0,\dots ) \big) \vec e_n \\
\hphantom{\mathcal{A} [x r_n(x)]}{} = a U J_3 U^{-1} U J_3 \vec e_n + bUJ_3 \vec e_n + d U \operatorname{diag} (1,0,0,0,\dots ) \vec e_n \\
\hphantom{\mathcal{A} [x r_n(x)]}{} = \big[(ax+b)x r_n(x) + d (r_n,r_0)_{L^2_\sigma}\big].
\end{gather*}
By the linearity we get
\begin{gather*} \mathcal{A} [xp(x)] = \big[(ax+b)x p(x) + d (p,1)_{L^2_\sigma}\big],\qquad p\in\mathbb{P}. \end{gather*}
Moreover, we have
\begin{gather*} \mathcal{A}[1] = UAU^{-1} U\vec e_0 = UA\vec e_0 = U \frac{1}{\alpha} (\vec e_1 - \beta \vec e_0) = \left[\frac{1}{\alpha} (r_1(x) - \beta) \right].
\end{gather*}
Choose an arbitrary polynomial $q(x)\in\mathbb{P}$. We may write $q(x)= x \frac{q(x)-q(0)}{x} + q(0)$. Then
\begin{gather*} \mathcal{A} [q(x)] = \mathcal{A} \left[ x \frac{q(x)-q(0)}{x} \right] + \mathcal{A} [q(0)] \\
\hphantom{\mathcal{A} [q(x)]}{} = \left[\big(ax^2 +bx\big) \frac{q(x)-q(0)}{x} + d \left( \frac{q(x)-q(0)}{x}, 1 \right)_{L^2_\sigma} + \frac{q(0)}{\alpha} (r_1(x)-\beta) \right] \\
\hphantom{\mathcal{A} [q(x)]}{} = \left[ (ax+b) q(x) + d \left( \frac{q(x)-q(0)}{x}, 1 \right)_{L^2_\sigma} \right].
\end{gather*}
Denote
\begin{gather*} s_k := \int_{\mathbb{R}} x^k {\rm d}\sigma,\qquad k\in\mathbb{Z}_+,\qquad s_{-1}:=0. \end{gather*}
Observe that
\begin{gather*} \mathcal{A} \left[\sum_{k=0}^\infty c_k x^k \right] = \sum_{k=0}^\infty c_k \mathcal{A} \big[x^k\big]
 = \sum_{k=0}^\infty c_k \big[ ax^{k+1} + bx^k + d s_{k-1} \big],\qquad c_k\in\mathbb{C},
\end{gather*}
where all but f\/inite number of $c_k$ are zeros (i.e., $\mathcal{A}$ acts on polynomials).

Observe that the operator $\mathcal{A}$ can be unbounded: the corresponding example will be considered below. Thus, there appears a question: how to simplify the calculation of $u(\mathcal{A})$ for a complex polynomial $u$ in the integral representation~(\ref{ff1_7_1})? Is there any (at least) polynomial calculus for $\mathcal{A}$? The answer is af\/f\/irmative. Consider the following transformation from $L^2_\sigma$ to~$l_2$
\begin{gather*} G \left[ \sum_{k=0}^\infty c_k x^k \right] = \sum_{k=0}^\infty c_k \vec e_k,\qquad c_k\in\mathbb{C}. \end{gather*}
Here all but f\/inite number of $c_k$ are zeros. This will be assumed in what follows when dealing with operators on polynomials. Since $\sigma$ satisf\/ies conditions for the integral representation~(\ref{ff2_5}) (see the Necessity of the proof of Theorem~\ref{tt2_1}), then $\int_\mathbb{R} |g(x)|^2 {\rm d}\sigma > 0$, for any non-zero complex polynomial $g$. This shows that two dif\/ferent polynomials can not belong to the same class of the equivalence in $L^2_\sigma$. Thus, the operator $G$ is well-def\/ined. It is a linear operator with $D(G) = \mathcal{P}$ and $R(G) = l_{2,{\rm f\/in}}$. Moreover, $G$ is invertible, since the polynomial is uniquely determined by its coef\/f\/icients. Set
\begin{gather*} \mathbf{A} = G \mathcal{A} G^{-1}. \end{gather*}
Then $\mathbf{A}$ is a linear operator in $l_2$ with $D(\mathbf{A})=l_{2,{\rm f\/in}}$. Consider the following shift operator on the whole~$l_2$
\begin{gather*} S x = \sum_{k=0}^\infty c_k \vec e_{k+1},\qquad
x\in l_2,\quad x = (c_k)_{k=0}^\infty,\quad c_k\in\mathbb{C}. \end{gather*}
Notice that $S$ is linear and $\| Sx \| = \| x \|$, $\forall\, x\in l_2$. We may write
\begin{gather*} \mathbf{A} \left( \sum_{k=0}^\infty c_k \vec e_k \right) =
(a S + b E) \sum_{k=0}^\infty c_k \vec e_k + \left( d \sum_{k=0}^\infty c_k s_{k-1} \right) \vec e_0. \end{gather*}
Here all but f\/inite number of $c_k\in\mathbb{C}$ are zeros. This will be assumed in the sequel for operators on $l_{2,{\rm f\/in}}$. By condition~(\ref{ff3_5}) we see that
\begin{gather*} |s_n| = \left| \int_\mathbb{R} x^n \chi_{[-c,c]}(x) {\rm d}\sigma \right| \leq \int_\mathbb{R} \big| x^n \chi_{[-c,c]}(x) \big| {\rm d}\sigma
\leq c^n,\qquad n\in\mathbb{Z}_+, \end{gather*}
where $\chi_{[-c,c]}(x)$ is the characteristic function of the segment $[-c,c]$. Therefore $\vec s := \sum\limits_{k=1}^\infty s_{k-1} \vec e_k$ belongs to $l_2$. Consequently, we may write
\begin{gather*} \mathbf{A} w = (a S + b E) w + d (w,\vec s)_{l_2} \vec e_0,\qquad w\in l_{2,{\rm f\/in}}. \end{gather*}
Set
\begin{gather*} \widehat{\mathbf{A}} w = (a S + b E) w + d (w,\vec s)_{l_2} \vec e_0,\qquad w\in l_2. \end{gather*}
The linear operator $\widehat{\mathbf{A}}$ is an extension of $\mathbf{A}$. Observe that the operator $\widehat{\mathbf{A}}$ is \textit{bounded} and
\begin{gather*} \big\| \widehat{\mathbf{A}} \big\| \leq a + |b| + |d| \| \vec s \|_{l_2}. \end{gather*}
We can apply Riesz's calculus for $\widehat{\mathbf{A}}$. For an arbitrary polynomial $u\in\mathbb{P}$ we may write
\begin{gather} G \big(u\big(\mathcal{A}\big) [1]\big) = u\big(\widehat{\mathbf{A}}\big) \vec e_0
= -\frac{1}{2\pi i} \int_\gamma u(z) R_z\big(\widehat{\mathbf{A}}\big) {\rm d}z \cdot \vec e_0 =
-\frac{1}{2\pi i} \int_\gamma u(z) \big( R_z\big(\widehat{\mathbf{A}}\big) \vec e_0 \big) {\rm d}z,\label{ff3_53}
\end{gather}
where $\gamma$ is a circle centered at zero with a radius $\rho$ bigger then $a + |b| + |d| \| \vec s \|_{l_2}$, and $R_z\big(\widehat{\mathbf{A}}\big) = \big(\widehat{\mathbf{A}} - zE\big)^{-1}$. Here the last integral converges in the norm of~$l_2$.

Let us calculate $R_z(\widehat{\mathbf{A}}) \vec e_0 =: \vec f$, $z\in\gamma$. We may write
\begin{gather*} \vec e_0 = \big(\widehat{\mathbf{A}} - zE\big) \vec f = (aS+bE) \vec f + d \big(\vec f,\vec s\big)_{l_2} \vec e_0 - z\vec f. \end{gather*}
Then
\begin{gather*} \frac{1}{a} \big(1- d \big(\vec f,\vec s\big)_{l_2}\big) \vec e_0 = \left( S - \left(\frac{z-b}{a}\right) E \right) \vec f. \end{gather*}
Since $z\in\gamma$, then $\big| \frac{z-b}{a} \big| >1$ and
\begin{gather}\label{ff3_55}
\vec f = \frac{1- d (\vec f,\vec s)_{l_2}}{a} \left( S - \left(\frac{z-b}{a}\right) E \right)^{-1} \vec e_0.
\end{gather}
Denote $\vec u = \sum\limits_{k=0}^\infty u_k \vec e_k := \big( S - \big(\frac{z-b}{a}\big) E \big)^{-1} \vec e_0$. Then
\begin{gather*} \vec e_0 = \left( S - \left(\frac{z-b}{a}\right) E \right) \vec u =S \vec u + \left( \frac{b-z}{a} \right) \vec u. \end{gather*}
For the components of $\vec u$ we obtain the following equations
\begin{gather*} \left( \frac{b-z}{a} \right) u_0 =1,\qquad u_{n-1} + \left( \frac{b-z}{a} \right) u_n = 0,\qquad n\in\mathbb{N}. \end{gather*}
Then
\begin{gather*} \vec u = \sum_{k=0}^\infty (-1) \left( \frac{a}{z-b} \right)^{k+1} \vec e_k. \end{gather*}
By~(\ref{ff3_55}) we may write
\begin{gather*} \vec f = \tau \sum_{k=0}^\infty (-1) \left( \frac{a}{z-b} \right)^{k+1} \vec e_k,\qquad \tau\in\mathbb{C}, \end{gather*}
where
\begin{gather*} \tau = \frac{1- d \big(\vec f,\vec s\big)_{l_2}}{a}. \end{gather*}
Then
\begin{gather*} a \tau = 1- d \left(\tau \sum_{k=0}^\infty (-1) \left( \frac{a}{z-b} \right)^{k+1} \vec e_k,\vec s \right)_{l_2}, \end{gather*}
and
\begin{gather*} \tau = \frac{1}{a + d \sum\limits_{k=0}^\infty (-1)\big( \frac{a}{z-b} \big)^{k+1} s_{k-1}}. \end{gather*}
Denote
\begin{gather*} s(z) := \sum_{k=0}^\infty (-1)\left( \frac{a}{z-b} \right)^{k+1} s_{k-1},\qquad z\in\gamma,\\
\vec v(z) := \sum_{k=0}^\infty (-1)\left( \frac{a}{z-b} \right)^{k+1} \vec e_k,\qquad z\in\gamma. \end{gather*}
Then
\begin{gather*} R_z\big(\widehat{\mathbf{A}}\big) \vec e_0 = \vec f = \frac{1}{a+d s(z)} \vec v(z),\qquad z\in\gamma. \end{gather*}
Thus, we obtained a transparent expression for $R_z\big(\widehat{\mathbf{A}}\big) \vec e_0$. It can be used in relation~(\ref{ff3_53}) and it gives a transparent expression for $u(\mathcal{A}) [1]$.

\begin{Example}\label{ee3_1} Let $\Theta$ be a Jacobi-type pencil with $\alpha = \beta = \sqrt{2}$, $a_k = \sqrt{2}$, $b_k = 2$, $k\in\mathbb{Z}_+$, $\alpha_n = \beta_n = 0$, $\gamma_n = 1$, $n\in\mathbb{Z}_+$, and $J_3$, $J_5$ have form~(\ref{f1_5}),~(\ref{f1_10}). This Jacobi type pencil was considered in~\cite{cit_95000_Z} and explicit formulas for the associated polynomials $p_n(\lambda)$ were obtained.

Let $\kappa$ be an arbitrary positive number bigger than $2+2\sqrt{2}$. Consider a Jacobi-type pencil $\widehat\Theta = \big(\widehat J_3, \widehat J_5, \widehat \alpha, \widehat \beta\big)$, where
\begin{gather*} \widehat J_3 = \frac{1}{\kappa} J_3,\qquad \widehat J_5 = \frac{1}{\kappa} J_5,\qquad
\widehat\alpha = \alpha,\qquad \widehat\beta = \beta. \end{gather*}
Notice that the associated polynomials and the associated operators for $\Theta$ and $\widehat\Theta$ are the same. It was shown in~\cite{cit_95000_Z} that the associated operator for $\Theta$ is unbounded. Thus, the associated operator for the pencil $\widehat\Theta$ is unbounded, as well. The pencil $\widehat\Theta$ satisf\/ies conditions~(\ref{ff3_5}),~(\ref{ff3_15}) with
\begin{gather*} c= \frac{2+2\sqrt{2}}{\kappa},\qquad a = \frac{1}{2}\kappa,\qquad b = -2,\qquad d=\frac{1}{\kappa}. \end{gather*}
\end{Example}

\subsection*{Acknowledgements}
The author is grateful to referees for their valuable comments and suggestions which led to an essential improvement of the paper.

\pdfbookmark[1]{References}{ref}
\LastPageEnding

\end{document}